\title
[Abstract formulation of the Cole-Hopf transform]
{
Abstract formulation \\ 
of the Cole-Hopf transform
}
\author{Yoritaka Iwata}
\address[Yoritaka Iwata]{ 
Institute of Innovative Research, Tokyo Institute of Technology;  \linebreak
Department of Mathematics, Shibaura Institute of Technology.}
\email{iwata\_phys@08.alumni.u-tokyo.ac.jp}
\thanks{The author is grateful to Prof. Emeritus Hiroki Tanabe for valuable comments.}
\keywords{Cole-Hopf transform, logarithm of operator, nonlinear semigroup}
\subjclass[2010]{47D03; 35A20; 34K30}
\theoremstyle{plain}
\newtheorem{theorem}{Theorem}
\newtheorem{lemma}[theorem]{Lemma}
\begin{document}

\begin{abstract}
Operator representation of Cole-Hopf transform is obtained based on the logarithmic representation of infinitesimal generators.
For this purpose the relativistic formulation of abstract evolution equation is introduced.
Even independent of the spatial dimension, the Cole-Hopf transform is generalized to a transform between linear and nonlinear equations defined in Banach spaces.
In conclusion a role of transform between the evolution operator and its infinitesimal generator is understood in the context of generating nonlinear semigroup.
\end{abstract}

\maketitle
\section{Introduction}  \label{sec1}
The Cole-Hopf transform \cite{15bateman, 48burgers, 51cole, 06forsyth, 50hopf} is a concept bridging the linearity and the nonlinearity.
For $t \in {\mathbb R}_+$ and $x \in {\mathbb R}$ the transformation reads
\begin{equation}  \begin{array}{ll}
 \psi(t,x) = -2 \mu^{-1/2} ~ \partial_x \log u(t,x), 
\end{array} \end{equation}
where $u(t,x)$ denotes the solution of linear equation, and $\psi(t,x)$ is the solution of transformed nonlinear equation.
On the other hand, e.g., for $t \in {\mathbb R}$ and $x \in {\mathbb R}^n$, the logarithmic representation of infinitesimal generator
\begin{equation} \begin{array}{ll}
 A(t) u_s(x) =  (I+ \kappa U(s,t))~ \partial_{t} {\rm Log} ~ (U(t,s) + \kappa I) u_s(x)
\end{array} \end{equation}
has been obtained in the abstract framework~\cite{17iwata-1}, where $U(t,s)$ denotes the evolution operator, and $A(t)$ is its infinitesimal generator.  
By taking a specific case with $\kappa = 0$, the similarity between them is clear.
That is, the process of obtaining infinitesimal generators from evolution operators is expected to be related to the emergence of nonlinearity.

In terms of the research history for the logarithm of operators, the logarithm of an injective sectorial operator was initiated by Nollau~\cite{69nollau} in 1969.
After some notable progresses (for example, see \cite{boyadzhiev, 03hasse, 06hasse, martinez, 00okazawa-01, 00okazawa-02}), the logarithm of operator is applied to represent the infinitesimal generator recently by Ref.~\cite{17iwata-1}. 
Following that work, the algebraic background of logarithmic representation of infinitesimal generators is studied \cite{17iwata-3}, where the product of unbounded infinitesimal generators in infinite-dimensional functional spaces is successfully defined by using the concept of a module over the Banach algebra.

In this paper the operator representation in abstract functional spaces is given to the Cole-Hopf transform.
Based on the logarithmic representation of infinitesimal generators obtained in Banach spaces, the Cole-Hopf transform is generalized in the following sense:
\begin{itemize}
\item the linear equation is not necessarily the heat equation;
\item the spatial dimension of the equations is not limited to 1;
\item the variable in the transform is not limited to a spatial variable $x$;
\end{itemize}
where, in order to realize these features, the relativistic formulation of abstract evolution equation is newly introduced.
Since the logarithmic representation shows a relation between an evolution operator and its infinitesimal generator, the correspondence to the Cole-Hopf transform means a possible appearance of nonlinearity in the process of defining an infinitesimal generator from the evolution operator.

\section{Mathematical settings}  \label{sec2}
\subsection{Logarithmic representation of infinitesimal generators}
Let $X$ and $B(X)$ be a Banach space with a norm $\| \cdot \|$ and a space of bounded linear operators on $X$, respectively.
The same notation is used for the norm equipped with $B(X)$, if there is no ambiguity.
For a positive and finite $T$, let $t$ and $s$ satisfy $-T \le t,s \le T$, $Y$ be a dense Banach subspace of $X$, and the topology of $Y$ be stronger than that of $X$.
The space $Y$ is assumed to be $U(t,s)$-invariant.
Let elements of evolution family $\{U(t,s) \}_{-T \le t, s  \le T}$ be mappings: $(t,s) \to U(t,s)$ satisfying the strong continuity for $-T \le t, s  \le T$.
The semigroup properties:
\begin{equation} \label{sg1} 
U(t,r)~ U(r,s) = U(t,s), 
\end{equation}
and
\begin{equation}  \label{sg2} 
U(s,s) = I, 
\end{equation}
are assumed to be satisfied, where $I$ denotes the identity operator of $X$.
Both $U(t,s)$ and $U(s,t)$ are assumed to be well-defined to satisfy
\begin{equation} \label{sg3}
U(s,t) ~ U(t,s) = U(s,s) = I,
\end{equation}
and the invertible property of $U(t,s)$ follows.
Consequently $U(t,s)$ is the two-parameter group on $X$. 

Next the pre-infinitesimal generator~\cite{17iwata-1} of $U(t,s)$ is introduced.
Pre-infinitesimal generator is a generalized concept of infinitesimal generator, but it is not necessarily an infinitesimal generator without assuming a dense property of domain space $Y$ in $X$.
In terms of extracting the universal feature independent of the domain of the operator, pre-infinitesimal generator is a useful concept for defining the logarithm of operator.
For $u_s \in Y$ and $-T \le t, s  \le T$, the pre-infinitesimal generator $A(t): Y  ~\to~  X$ is defined by
\begin{equation} \label{pe-group}
A(t) u_s := \mathop{\rm wlim}\limits_{h \to 0}  h^{-1} (U(t+h,t) - I) u_s
\end{equation}
using a weak limit, and then the definition of $t$-differential of $U(t,s)$ in a weak sense
\begin{equation} \label{de-group} \begin{array}{ll} 
\partial_t U(t,s)~u_s = A(t) U(t,s) ~u_s
\end{array}  \end{equation}
follows.
Accordingly the operator $A(t)$ is generally unbounded in $X$.
Equation~(\ref{de-group}) is regarded as the abstract evolution equation with $t$-dependent infinitesimal generator (for example, see \cite{79tanabe}).
Consequently, for $-T \le t \le T$, $A(t)$ is represented by means of the logarithm~\cite{17iwata-1}; there exists a certain complex number $\kappa \ne 0$ such that
\begin{equation} \label{logex} \begin{array}{ll}
 A(t) ~ u_s =  (I+ \kappa U(s,t))~ \partial_{t} {\rm Log} ~ (U(t,s) + \kappa I) ~ u_s,
\end{array} \end{equation}
where ${\rm Log}$ denotes the principal branch of logarithm.
A part $\partial_{t} {\rm Log} ~ (U(t,s) + \kappa I)$ itself is an infinitesimal generator, and the concept of evolution operators without satisfying the semigroup property follows \cite{17iwata-3}. 

\subsection{Relativistic formulation of abstract evolution equations}
Let the standard space-time variables $(t,x,y,z)$ be denoted by $(x^0,x^1,x^2,x^3)$ receptively.
It is further possible to generalize space-time variables to $(x^0,x^1,x^2,x^3, \cdots, x^{n})$ to be valid to $n$-dimensional space. 
In spite of the standard treatment of abstract evolution equations, the direction of evolution does not necessarily mean time-variable ``$t$'' in this relativistic treatment of the evolution equations.
For an evolution family of operators $\{U(x^i,\xi^i)\}_{-L \le x^i,\xi^i \le L}$ in a Banach space $X$, the condition to obtain the logarithmic representation is as follows~\cite{17iwata-1}: 
\begin{itemize}
\item $U(x^i,\xi^i)$ is the invertible evolution operator generated by $K(x^i)$;
\item $U(x^i,\xi^i)$ and $K(x^i)$ are assumed to commute. 
\end{itemize}
Note that commutation is necessarily satisfied if $K(x^i)$ is independent of $x^i$, so that the second condition does not matter for such cases. 
In the sense of relativistic formulation the standard abstract evolution equation (\ref{de-group}) is generalized to
\begin{equation}  \begin{array}{ll} 
\partial_{x^i} U(x^i, \xi^i)~u_{\xi^i} = K(x^i) U(x^i,\xi^i) ~u_{\xi^i}
 \end{array}  \end{equation}
in $X$.
The problem is that $K(x^i)$ is not necessarily an infinitesimal generator even if $K(x^j)$ is an infinitesimal generator for $i \ne j$.
The logarithmic representation becomes 
\begin{equation} \label{logex2} \begin{array}{ll}
 K(x^i) ~ u_{\xi^i} =
  (I+ \kappa U(\xi^i,x^i))~ \partial_{x^i} {\rm Log} ~ (U(x^i,\xi^i) + \kappa I) ~ u_{\xi^i},
\end{array} \end{equation}
where $u_{\xi}$ is an element in dense subspace $Y$ of $X$, and the notation follows from Eq.~(\ref{logex}).
If $x^i$ is not equal to $x^0$, then the direction of evolution can be naturally either positive or negative.
Note that, similar to the standard treatment of the abstract evolution equations, the boundary condition is usually included in the functional space.

\section{Cole-Hopf transform}  \label{sec3}
Before moving on to the main discussion developed in Banach spaces, the calculations related to the Cole-Hopf transform is reviewed.
The Cole-Hopf transform is the transform between the solutions of the Burgers' equation and those of heat equation. 
By changing the unknown functions as
\begin{equation}  \label{ch-trans} \begin{array}{ll}
 \psi(t,x) = -2 \mu^{-1/2} ~ \partial_x \log u(t,x)
  = -2 \mu^{-1/2} (\partial_x u(t,x)) ~ u(t,x)^{-1}, 
\end{array} \end{equation}
the linear heat equation 
\[ \begin{array}{ll}
 \partial_x^2 u -  {\mu}^{1/2}  {\partial_t} u = 0
\end{array} \]
is transformed to a nonlinear equation called the Burgers' equation
\begin{equation}  \label{bgeq} \begin{array}{ll}
\partial_t \psi + \psi \partial_x \psi =  \mu^{-1/2} ~ \partial_x^2 \psi,
\end{array} \end{equation}
where $u(t,x)$ permits its logarithm function (cf.~the hypoelliptic property and the positivity of parabolic evolution equation).
It is notable that this nonlinear equation is exactly solvable. 
The outline of the transformation is demonstrated in the following.
The derivatives are calculated by
\[ \begin{array}{ll}
\partial_t \psi(t,x)
 = - \frac{2}{\mu^{1/2}}  \Large\{ u(t,x)^{-1} \partial_t \partial_x u(t,x)
- u(t,x)^{-2}  \partial_t u(t,x)  \partial_x u(t,x) \Large\},     \vspace{2.5mm}\\
\partial_x \psi(t,x) 
 = - \frac{2} {\mu^{1/2}} ~ \Large\{ u(t,x)^{-1}   \partial_x^2 u(t,x)  - u(t,x)^{-2}  \left( \partial_x u(t,x) \right)^2  \Large\},      \vspace{2.5mm}\\
\partial_x^2 \psi(t,x) 
 = - \frac{2} {\mu^{1/2}}  \Large\{ u(t,x)^{-1}   \partial_x^3 u(t,x)  +2 u(t,x)^{-3}  \left( \partial_x u(t,x) \right)^3  \\
\hspace{18mm} -3 u(t,x)^{-2}   \partial_x u(t,x) ~ \partial_x^2 u(t,x)   \Large\} .
\end{array} \]
By substituting them, the Burgers' equation becomes
\[ \begin{array}{ll}
   \mu^{1/2}  \Large[  - u(t,x)^{-1}    \partial_t  \partial_x u(t,x)  
 + u(t,x)^{-2}  \partial_t u(t,x)  \partial_x u(t,x) \Large]  \\
 +   
  \Large[ 2 u(t,x)^{-2}  \partial_x u(t,x)  \partial_x^2 u(t,x)  
  - 2 u(t,x)^{-3}  \left( \partial_x u(t,x) \right)^3  \Large]  \\
 -\Large[ u(t,x)^{-1}   \partial_x^3 u(t,x)  - 3 u(t,x)^{-2}   \partial_x u(t,x) ~ \partial_x^2 u(t,x)   
 + 2 u(t,x)^{-3}  \left( \partial_x u(t,x) \right)^3
 \Large]
 = 0.
\end{array} \]
By multiplying $ u(t,x)^{2}$,
\[ \begin{array}{ll}
  u(t,x) \partial_x  [ \partial_t   u(t,x)-\mu^{-1/2}   \partial_x^2 u(t,x) ]   
  -\partial_x u(t,x) [  \partial_t   u(t,x)-\mu^{-1/2}   \partial_x^2 u(t,x) ]    = 0
\end{array} \]
follows.
If $ \partial_t   u(t,x)-\mu^{-1/2}   \partial_x^2 u(t,x) $ is nonzero, then
\begin{equation} \label{taisuu} \begin{array}{ll}
  \partial_x \log  \left[ \frac{\partial_t   u(t,x)-\mu^{-1/2}   \partial_x^2 u(t,x)}{u(t,x)} \right]   = 0
\end{array} \end{equation}
is true.
That is, for an arbitrary $f(t)$,
\[ \begin{array}{ll}
\partial_t   u(t,x)-\mu^{-1/2}   \partial_x^2 u(t,x)= f(t) u(t,x) 
\end{array} \]
is obtained, and it is equivalent to
\[ \begin{array}{ll}
\partial_t  [ u(t,x) \exp (-\int^t_{s} f d \tau) ]  
 -\mu^{-1/2}   \partial_x^2 [ u(t,x) \exp (-\int^t_{s} f d \tau) ] = 0. 
\end{array} \]
Due to the arbitrariness of $f(t)$, this equation is also regarded as the heat equation.
In this way the solutions of linear and nonlinear equations are connected by the Cole-Hopf transform.

\section{Main results}
\subsection{The abstract framework for the Cole-Hopf transform}
It is necessary to recognize the evolution direction of the heat equation as $x$, because the derivative on the spatial direction $x$ is considered in the Cole-Hopf transform.
Let us begin with one-dimensional heat equation
\begin{equation} \label{heat} \begin{array}{ll}
\partial_x^2 u(t,x) -\mu^{1/2}  \partial_t u(t,x) = 0, \quad t \in (0, \infty) , ~ x \in (-L, L), \vspace{2.5mm}\\
u(t,-L) = u(t,L)  = 0,  \quad t \in (0, \infty), \vspace{2.5mm}\\
u(0,x) = u_0(x),  \quad x \in (-L, L),
\end{array} \end{equation}
where $\mu$ is a real positive number, and the hypoelliptic property of parabolic evolution equation is true.
The first equation of (\ref{heat}) is hypoelliptic; for an open set ${\mathcal U} \subset (-\infty,\infty) \times  (-L,L)$, $u \in C^{\infty}({\mathcal U})$ follows from $(  \partial_x^2 - \mu^{1/2} \partial_t) u \in C^{\infty}({\mathcal U})$.

Equation~(\ref{heat}) is well-posed in $C^{\infty}(0,\infty) \times C^{\infty}(-L,L)$, so that $\mu^{1/2}  \partial_t$ is the infinitesimal generator in $C^{\infty}(-L,L)$.
The spaces $C^{1}(-L,L)$ and  $C^{\infty}(-L,L)$ are dense in $L^{p}(-L,L)$ with $1 \le p < \infty$.
The solution is represented by
\[ \begin{array}{ll}
 u(t,x) = U(t) u_0, \ 
\end{array} \]
where $U(t)$ is a semigroup generated by $\mu^{-1/2}  \partial_x^2$ under the Drichlet-zero boundary condition.

By changing the evolution direction from $t$ to $x$, the heat equation
\begin{equation} \label{heat2} \begin{array}{ll}
\partial_x^2 u(t,x) -\mu^{1/2}  \partial_t u(t,x) = 0, \quad t \in (0, \infty) , ~ x \in (-L, L), \vspace{2.5mm}\\
u(t,-L) = v_0(t), \quad \partial_x u(t,-L) = v_1(t),  \quad t \in (0, \infty), \vspace{2.5mm}\\
u(0,x) = 0,  \quad x \in (-L, L),
\end{array} \end{equation}
is considered for $x$-direction, where $v_0(t)$ and $v_1(t)$ are given initial functions.
To establish the existence of semigroup for the $x$-direction, it is sufficient to consider the generation of semigroup in $L^2(-\infty, \infty)$ by generalizing $t$-interval from $(0, \infty)$ to $(-\infty, \infty)$.
The Fourier transform leads to
\begin{equation}  \begin{array}{ll}
\partial_x^2 \tilde{u} - i \mu^{1/2} \omega \tilde{u}  = 0 \vspace{2.5mm}\\
\tilde{u} (\omega,0)  =  {\tilde v}_0(\omega)  , \quad  
\partial_x \tilde{u} (\omega,0) =  {\tilde v}_1(\omega) , 
\end{array} \label{eq-add} \end{equation}
where $\omega$ is a real number.
Indeed, the following transforms
\[  \begin{array}{ll}
u (t,x) = \frac{1}{2 \pi} \int_{-\infty}^{\infty}  \tilde{u}(\omega,x)  e^{ i \omega t} d \omega, \vspace{1.5mm} \\
u (t, -L) = \frac{1}{2 \pi}  \int_{-\infty}^{\infty} {\tilde u} (\omega, -L) e^{ i \omega t} d \omega    \vspace{1.5mm}, \\
\partial_x  u (t, -L) = \frac{1}{2 \pi}  \int_{-\infty}^{\infty}   \partial_x {\tilde u} (\omega, -L)  e^{i \omega t} d \omega  
\end{array} \]
are implemented.
By solving the characteristic equation $\lambda^2 -i \mu^{1/2} \omega = 0$, the Fourier transformed solution of (\ref{heat2}) is
\[  \begin{array}{ll}
\tilde{u}(\omega, x) 
 = \frac{ {\tilde v}_0(\omega)- i(- i \mu^{1/2}\omega )^{-1/2} {\tilde v}_1(\omega) }{2}  e^{ + ( i \mu^{1/2} \omega)^{1/2}  x}  
+  \frac{ {\tilde v}_0(\omega)+i  (- i \mu^{1/2}\omega)^{-1/2} {\tilde v}_1(\omega) }{2} e^{ - ( i \mu^{1/2} \omega)^{1/2}  x },
\end{array} \]
where
\[  \begin{array}{ll}
\tilde{u}(\omega, 0) 
 = \frac{ {\tilde v}_0(\omega)- i(- i \mu^{1/2}\omega)^{1/2} {\tilde v}_1(\omega) }{2}  +  \frac{ {\tilde v}_0(\omega)+i  (- i \mu^{1/2}\omega)^{1/2}  {\tilde v}_1(\omega)}{2} = {\tilde v}_0(\omega),
  \vspace{2.5mm} \\
\partial_x \tilde{u}(\omega, 0) 
 =  ( i \mu^{1/2} \omega)^{1/2} \left( \frac{ {\tilde v}_0(\omega)- i(- i \mu^{1/2}\omega)^{-1/2} {\tilde v}_1(\omega) }{2}  -  \frac{ {\tilde v}_0(\omega)+i  (- i \mu^{1/2}\omega)^{-1/2} {\tilde v}_1(\omega) }{2}\right) =  {\tilde v}_1(\omega).
\end{array} \]
Meanwhile, based on the relativistic treatment, one-dimensional heat equation
\[ \begin{array}{ll}
\partial_x^2 u -\mu^{1/2}  \partial_t u = 0
\end{array} \]
is written as
\[ \begin{array}{ll}
\partial_x 
\left( \begin{array}{ll}
 u \\
 v
 \end{array}  \right) 
  -
\left( \begin{array}{cc}
0 & I \\
 \mu^{1/2}  \partial_t   & 0
\end{array} \right)
\left( \begin{array}{ll}
 u \\
 v
 \end{array}  \right)
 = 0. \vspace{1.5mm}
\end{array} \]
Let a linear operator ${\mathcal A}$ be defined by
\[ \begin{array}{ll}
{ \mathcal A} = 
\left( \begin{array}{cc}
0 & I \\
\mu^{1/2}  \partial_t   & 0
\end{array} \right)
 \end{array} \]
in $L^2(-\infty,\infty) \times L^2(-\infty,\infty)$, and the domain space of ${\mathcal A}$ be
\[ \begin{array}{ll} 
D({\mathcal A})= H^1(-\infty,\infty) \times L^2(-\infty,\infty), 
\end{array} \]
where $H^1(-\infty,\infty) = \{ u(t) \in L^2(-\infty,\infty);   ~u'(t) \in L^2(-\infty,\infty), ~ u(0) = 0  \}$ is a Sobolev space.
The Fourier transform means that the diagonalization of ${\mathcal A} $ is equal to
\[ \begin{array}{ll}
\tilde{\mathcal A} = 
\left( \begin{array}{cc}
(\mu^{1/2}  \partial_t )^{1/2}  & 0 \\
0  & - ( \mu^{1/2}  \partial_t )^{-1/2} 
\end{array} \right).
 \end{array} \]
In this context the master equation of the problem (\ref{heat2}) is reduced to the abstract evolution equation
\begin{equation} \begin{array}{ll}
\partial_x 
\left( \begin{array}{ll}
 {\tilde u} \\
 {\tilde v}
 \end{array}  \right) 
  -
\tilde{\mathcal A}
\left( \begin{array}{ll}
 {\tilde u} \\
 {\tilde v}
 \end{array}  \right)
 = 0
\end{array} \label{abs} \end{equation}
in $L^2(-\infty,\infty) \times L^2(-\infty,\infty)$, where 
\[  \begin{array}{ll}
\tilde{u}=
 \frac{u - i(- i \mu^{1/2}\omega )^{-1/2} v }{2}  
\end{array} \]
and
\[  \begin{array}{ll}
\tilde{v}=
 \frac{ u +i  (- i \mu^{1/2}\omega)^{-1/2} v }{2}.
\end{array} \]
It suggests that the evolution operator of Eq.~(\ref{heat2}) is generated by
\begin{equation}  \begin{array}{ll}
\pm   (\mu^{1/2} \partial_t)^{1/2}, 
\end{array} \end{equation}
so that it is sufficient to show $\pm   (  \mu^{1/2} \partial_t)^{1/2}$ as the infinitesimal generator.
Note that the operator $\tilde{\mathcal A}$ is not necessarily a generator of analytic semigroup, because the propagation of singularity should be different if the evolution direction is different.
Consequently the existence of semigroup for (\ref{abs}) in the $x$-direction is reduced to show $\pm (\mu^{1/2} \partial_t)^{1/2}$ as the infinitesimal generator in $L^2(-\infty, \infty)$.
In the following, the property of $\mu^{1/2} \partial_t$ is discussed at first, and the fractional power of $ (  \mu^{1/2} \partial_t)^{1/2}$ is studied in the next.

\begin{lemma}
The operator $\mu^{1/2} \partial_t$ with the domain $H^1(-\infty, \infty)$ is the infinitesimal generator in $L^2(-\infty, \infty)$.
\label{lemma1} \end{lemma}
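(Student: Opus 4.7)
The plan is to identify $\mu^{1/2}\partial_t$ as the generator of the translation group on $L^2(\mathbb{R})$ and then invoke a standard generation theorem. I would proceed by Fourier diagonalization in $t$, leveraging the same transform already in use throughout Section~4.

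First, under the Fourier transform $\mathcal{F}$ the operator $\mu^{1/2}\partial_t$ is unitarily equivalent to the multiplication operator $M:\tilde u(\omega)\mapsto i\mu^{1/2}\omega\,\tilde u(\omega)$ on its natural domain $D(M)=\{\tilde u\in L^2(\mathbb{R}):\omega\tilde u\in L^2(\mathbb{R})\}$. Because the multiplier is purely imaginary, $M$ is skew-adjoint; in particular it is densely defined and closed, its spectrum lies on the imaginary axis, and for every real $\lambda\ne 0$ the resolvent $(\lambda I-M)^{-1}$ acts by multiplication by $(\lambda-i\mu^{1/2}\omega)^{-1}$, whose operator norm is bounded by $|\lambda|^{-1}$. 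Stone's theorem then yields a $C_0$-group of unitaries generated by $M$, namely multiplication by $e^{i\mu^{1/2}\omega s}$. Transferring back through Plancherel's isometry, $\mu^{1/2}\partial_t$ generates the shift group $(T(s)u)(t)=u(t+\mu^{1/2}s)$ on $L^2(\mathbb{R})$, which in particular is a $C_0$-semigroup on either time direction. The required identification of the generator's domain with $H^1(\mathbb{R})$ follows because $u\in L^2$ lies in $D(M\circ\mathcal{F})$ iff $\omega\tilde u\in L^2$, and this is exactly the distributional condition $u'\in L^2$.

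The main obstacle is not analytic but notational: the space $H^1(-\infty,\infty)$ introduced just before the lemma carries the extra pointwise constraint $u(0)=0$, which is neither preserved by the translation group nor compatible with $\mu^{1/2}\partial_t$ being a generator (closedness and density together with invariance of the domain would fail on functions whose support is shifted across the origin). I would therefore note explicitly that the constraint $u(0)=0$ must be dropped, interpret $H^1(-\infty,\infty)$ as the standard Sobolev space $W^{1,2}(\mathbb{R})$, and remark that this is precisely the maximal domain forced by the Fourier-side computation. Once this reconciliation is made, the lemma reduces to a textbook application of Stone's theorem, and the ensuing definition of the fractional power $(\mu^{1/2}\partial_t)^{1/2}$ needed in Section~4 can proceed via the spectral calculus on the multiplier $i\mu^{1/2}\omega$.
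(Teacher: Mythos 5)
Your proof is correct, but it takes a genuinely different route from the paper. You diagonalize $\mu^{1/2}\partial_t$ by the Fourier transform, observe that the multiplier $i\mu^{1/2}\omega$ is purely imaginary so the operator is skew-adjoint, and invoke Stone's theorem to obtain the unitary translation group, reading off the domain $\{u \in L^2 : u' \in L^2\}$ from the Fourier side. The paper instead works directly with the resolvent: it writes the explicit integral representation $u(t) = -\mu^{-1/2}\int_t^{\infty} \exp\bigl(-\lambda(s-t)/\mu^{1/2}\bigr) f(s)\,ds$ for the solution of $(\lambda - \mu^{1/2}\partial_t)u = f$, estimates it by the Schwarz inequality to get $\|(\lambda I - \mu^{1/2}\partial_t)^{-1}\| \le 1/\mathrm{Re}\,\lambda$ for $\mathrm{Re}\,\lambda > 0$, iterates this to the power bound $\|(\lambda I - \mu^{1/2}\partial_t)^{-n}\| \le 1/(\mathrm{Re}\,\lambda)^n$, and then cites the Lumer--Phillips generation theorem; the group property is recovered afterwards by the analogous estimate for $\mathrm{Re}\,\lambda < 0$. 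Your argument is shorter and gives more at once (unitarity of the group, the exact spectral picture, and the precise maximal domain), but it is tied to the Hilbert space structure of $L^2$ through Plancherel and Stone; the paper's resolvent-estimate argument is more pedestrian but is exactly the Banach-space-compatible machinery (Hille--Yosida/Lumer--Phillips bounds) that matches the abstract framework used elsewhere in the paper, and it also prepares the fractional-power construction of Lemma~2 via Bochner--Phillips rather than spectral calculus. Your observation about the domain is a fair criticism of the statement rather than a gap in your proof: the space $H^1(-\infty,\infty)$ as defined just before the lemma carries the constraint $u(0)=0$, under which $\lambda I - \mu^{1/2}\partial_t$ fails to be surjective (the paper's own integral representation does not satisfy $u(0)=0$ in general), so the lemma is only correct with the standard Sobolev space $W^{1,2}(\mathbb{R})$ as domain --- a point the paper's proof silently passes over and you handle explicitly.
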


\begin{proof}
Let $\lambda$ be a complex number satisfying ${\rm Re }\lambda >0$.
First the existence of $(\lambda - \mu^{1/2} \partial_t)^{-1}$ is examined.
Let $f(t)$ be included in $L^2 (-\infty,\infty)$. 
Because
\begin{equation} \begin{array}{ll}
(\lambda - \mu^{1/2} \partial_t) u = f
\end{array} \label{iheq} \end{equation}
in one-dimensional interval $(-\infty,\infty)$ is a first-order ordinary differential equation with a constant coefficient, and the global-in-$t$ solution necessarily exists for a given $u(0)=u_0 \in {\mathbb C}$.
That is, $\lambda / \mu^{1/2}$ is included in the resolvent set of $\partial_t$ for an arbitrary complex number $\lambda$, so that $(\lambda - \mu^{1/2} \partial_t)^{-1}$ is concluded to be well-defined in $H^1(-\infty,\infty)$.

Second the resolvent operator $(\lambda - \mu^{1/2} \partial_t)^{-1}$ is estimated from the above.
Since $\lambda / \mu^{1/2}$ is included in the resolvent set of $\partial_t$,
it is readily seen that $(\lambda - \mu^{1/2} \partial_t)^{-1}$ is a bounded operator on $L^2(-\infty, \infty)$.
More precisely, let us consider Eq.~(\ref{iheq}) being equivalent to
\begin{equation}  \begin{array}{ll}
 \partial_t u(t) =   (\lambda / \mu^{1/2}) u(t) - f(t) / \mu^{1/2}.
\end{array}  \label{sim1} \end{equation}
If the inhomogeneous term satisfies $f(t) \in L^2 (-\infty,\infty)$, 
\begin{equation} \begin{array}{ll}
u(t) = 
- \frac{1}{\mu^{1/2}} \int_t^{\infty}  \exp \left( \frac{- \lambda }{ \mu^{1/2} }  (s-t) \right) f(s)  ds
\end{array} \label{intrep} \end{equation}
satisfies Eq.~(\ref{sim1}).
According to the Schwarz inequality,
\[ \begin{array}{ll}
\int_{-\infty}^{\infty} |u(t)|^2 dt  
 = \int_{-\infty}^{\infty} \left| \frac{1}{\mu^{1/2}} \int_t^{\infty}  \exp \left( \frac{- \lambda }{ \mu^{1/2} }  (s-t) \right) f(s)  ds  \right|^2 dt      \vspace{2.5mm}  \\
\quad \le \frac{1}{\mu} \int_{-\infty}^{\infty} \left\{ \int_t^{\infty} 
  \exp \left( \frac{- {\rm Re} \lambda }{ 2 \mu^{1/2} }  (s-t) \right) \right. 
  \left. \exp \left( \frac{- {\rm Re} \lambda }{ 2 \mu^{1/2} }  (s-t) \right) | f(s) |  ds  \right\}^2 dt      \vspace{2.5mm}  \\
\quad \le \frac{1}{\mu} \int_{-\infty}^{\infty}   \int_t^{\infty} 
  \exp \left( \frac{-  {\rm Re} \lambda }{ \mu^{1/2} }  (s-t) \right) ds 
 ~ \int_t^{\infty}  \exp \left( \frac{-  {\rm Re} \lambda }{ \mu^{1/2} }  (s-t) \right) | f(s) |^2  ds ~ dt
\end{array} \]
is obtained, and the equality
\[ \begin{array}{ll}
\int_{t}^{\infty}  \exp \left( \frac{-  {\rm Re} \lambda }{ \mu^{1/2} }  (s-t) \right) ds 
= \int_{0}^{\infty}  \exp \left( \frac{- {\rm Re} \lambda }{ \mu^{1/2} } s \right) ds 
=  \frac{ \mu^{1/2} }{  {\rm Re} \lambda }
\end{array} \]
is positive if ${\rm Re}\lambda >0$ is satisfied.
Its application leads to
\[ \begin{array}{ll}
\int_{-\infty}^{\infty} |u(t)|^2 dt    
 \le \frac{1}{\mu} \frac{ \mu^{1/2} }{  {\rm Re} \lambda }   \int_{-\infty}^{\infty}  
  \int_t^{\infty}  \exp \left( \frac{-  {\rm Re} \lambda }{ \mu^{1/2} }  (s-t) \right) | f(s) |^2  ds ~ dt    \vspace{2.5mm}  \\
\quad = \frac{1}{\mu} \frac{ \mu^{1/2} }{  {\rm Re} \lambda }   \int_{-\infty}^{\infty}  
  \int_{-\infty}^{s}  \exp \left( \frac{-  {\rm Re} \lambda }{ \mu^{1/2} }  (s-t) \right) ~ dt  ~  | f(s) |^2  ~ ds.
\end{array} \]
Further application of the equality
\[ \begin{array}{ll}
\int_{-\infty}^{s}  \exp \left( \frac{-  {\rm Re} \lambda }{ \mu^{1/2} }  (s-t) \right) dt
= \int_{-\infty}^{0}  \exp \left( \frac{  {\rm Re} \lambda }{ \mu^{1/2} } t \right) dt 
=  \frac{ \mu^{1/2} }{  {\rm Re} \lambda }
\end{array} \]
results in
\[ \begin{array}{ll}
\int_{-\infty}^{\infty} |u(t)|^2 dt    \vspace{2.5mm}  
 \le  \frac{1}{\mu} \frac{ \mu^{1/2} }{  {\rm Re} \lambda }   \int_{-\infty}^{\infty}  
 \frac{ \mu^{1/2} }{ {\rm Re} \lambda } ~  | f(s) |^2  ~ ds  
 = \frac{ 1 }{  {\rm Re} \lambda^2 }   \int_{-\infty}^{\infty}   ~  | f(s) |^2  ~ ds,
\end{array} \]
for ${\rm Re}\lambda >0$, and therefore
\[ \begin{array}{ll} 
\| (\lambda I - \mu^{1/2} \partial_t)^{-1} f(t) \|_{L^2(-\infty,\infty)} 
\le \frac{1}{ {\rm Re} \lambda} \| f(t) \|_{L^2(-\infty,\infty)}
\end{array} \]
follows. 
That is, for ${\rm Re}\lambda >0$,
\begin{equation} \label{hyouka-1} \begin{array}{ll}
\|(\lambda I - \mu^{1/2} \partial_t)^{-1} \| \le 1/  {\rm Re} \lambda
\end{array} \end{equation}
is valid.
The surjective property of $\lambda I - \mu^{1/2} \partial_t$ is seen by the unique existence of solutions $u \in L^2(-\infty,\infty)$ for the Cauchy problem of Eq.~(\ref{sim1}).

A semigroup is generated by taking a subset of the complex plane as 
\[ \begin{array} {ll}
\Omega = \left\{ \lambda \in {\mathbb C}; ~ \lambda = {\bar \lambda}  \right\},
\end{array} \]
where $\Omega$ is included in the resolvent set of $\mu^{1/2} \partial_t$.
For $\lambda \in \Omega$, $(\lambda I - \mu^{1/2} \partial_t)^{-1}$ exists, and
\begin{equation} \begin{array}{ll}
\| (\lambda I - \mu^{1/2} \partial_t)^{-n} \| 
 \le 1/({\rm Re} \lambda)^{n}
\end{array} \label{resolves} \end{equation}
is obtained.
Consequently, according to the Lumer-Phillips theorem~\cite{61lumer} for the generation of quasi contraction semigroup, $\mu^{1/2} \partial_t$ is confirmed to be an infinitesimal generator in $L^2(-\infty,\infty)$, and the unique existence of global-in-$x$ weak solution follows.
\end{proof}

The semigroup generated by $\mu^{1/2} \partial_t$ is represented by
\[ \begin{array}{ll}
\left(  \exp (h \mu^{1/2} \partial_t) u \right) (t) = u(t+\mu^{1/2}h), \quad -\infty < h < \infty,
\end{array} \]
so that the group is actually generated by $\mu^{1/2} \partial_t$.
Indeed, the similar estimate as Eq.~(\ref{resolves}) can be obtained for ${\rm Re} \lambda < 0$ with $(\lambda - \mu^{1/2})u=f$ in which the solution $u$ is represented by
\[ \begin{array}{ll}
u(t) = -\frac{1}{\mu^{1/2}} \int_{-\infty}^t \exp \left( \frac{-\lambda}{\mu^{1/2}} (s-t) \right) f(s) ds
\end{array} \]
that should be compared to Eq.~(\ref{intrep}).
The next lemma is valid.

\begin{lemma}
For $0<\alpha <1$, the operator $(\mu^{1/2} \partial_t)^{\alpha}$ is the infinitesimal generator in $L^2(-\infty, \infty)$.
\label{thm1}
\end{lemma}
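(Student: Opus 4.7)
My approach is to Fourier-diagonalize $A := \mu^{1/2}\partial_t$ on $L^2(-\infty,\infty)$, define the fractional power through the resulting multiplication operator, and verify the Hille--Yosida resolvent estimate by a distance-to-sector argument. By Lemma~\ref{lemma1} and the remark immediately after it, $A$ generates the translation group on $L^2(\mathbb{R})$, its spectrum is exactly $i\mathbb{R}$, and Plancherel's theorem yields a unitary equivalence of $A$ with multiplication by the symbol $i\mu^{1/2}\omega$.

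\textbf{Definition and resolvent estimate.} I would set
\[
A^{\alpha} := \mathcal{F}^{-1}\, M_{(i\mu^{1/2}\omega)^{\alpha}} \, \mathcal{F},
\]
using the principal branch of the power, with natural domain $\{u\in L^2 : (1+|\omega|)^{\alpha}\widehat{u}\in L^2\}$. This domain contains the Schwartz class and is therefore dense, and $A^{\alpha}$ is closed because the multiplier is. For $0<\alpha<1$ the range of the symbol lies on the two rays $\{r e^{\pm i\alpha\pi/2}: r\ge 0\}$, so that $\mathrm{Re}\,(i\mu^{1/2}\omega)^{\alpha}=\mu^{\alpha/2}|\omega|^{\alpha}\cos(\alpha\pi/2)\ge 0$. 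An elementary calculation then yields $|\lambda + (i\mu^{1/2}\omega)^{\alpha}|\ge \lambda$ for every real $\lambda>0$ and every $\omega\in\mathbb R$, and hence by Plancherel $\|(\lambda + A^{\alpha})^{-n}\|\le \lambda^{-n}$ for all $n\ge 1$. This is precisely the Hille--Yosida contraction condition, so $-A^{\alpha}$ generates a $C_0$ (in fact analytic) contraction semigroup on $L^2(-\infty,\infty)$.

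\textbf{Main obstacle.} The delicate point is the sign: in the strict $C_0$ sense it is $-A^{\alpha}$, not $A^{\alpha}$, that generates a forward semigroup, since the range of the symbol sits in the closed right half-plane. The statement of the lemma is compatible with this only through the relaxed notion of generator articulated after~(\ref{logex2}), in which the evolution along a non-temporal variable $x^i\ne x^0$ may be run with either sign; under that reading the two branches $\pm A^{1/2}$ anticipated before~(\ref{abs}) correspond to the generators of two complementary half-line semigroups in $x$, to both of which the same estimate applies. A secondary check, should one want a Fourier-free derivation, is to confirm that the multiplier definition of $A^{\alpha}$ agrees with the Balakrishnan representation
\[
A^{\alpha}u \;=\; \frac{\sin \alpha\pi}{\pi}\int_0^\infty s^{\alpha-1}(s+A)^{-1}A u\,ds,
\]
whose convergence is guaranteed by the resolvent bound of Lemma~\ref{lemma1}.
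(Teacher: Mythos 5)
Your argument is correct, but it follows a genuinely different route from the paper. The paper's proof is a one-step appeal to the classical Bochner--Phillips construction (with Yosida's refinement): since $\mu^{1/2}\partial_t$ generates the translation semigroup ${\mathcal V}$ on $L^2(-\infty,\infty)$ by Lemma~\ref{lemma1}, the fractional power is \emph{defined} as the generator of the subordinated semigroup $W(x)w_0=\int_0^\infty {\mathcal V}(\lambda)w_0\,d\gamma(\lambda)$, where the nonnegative measure $d\gamma$ comes from the Laplace identity $\exp(-tk^\alpha)=\int_0^\infty \exp(-\lambda k)\,d\gamma(\lambda)$; no Fourier analysis or explicit resolvent estimate for the power appears. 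You instead exploit the Hilbert-space structure: Plancherel gives the unitary equivalence of $\mu^{1/2}\partial_t$ with multiplication by $i\mu^{1/2}\omega$, the principal-branch multiplier $(i\mu^{1/2}\omega)^\alpha$ has range in the sector $|\arg z|\le \alpha\pi/2$, and the bound $|\lambda+(i\mu^{1/2}\omega)^\alpha|\ge\lambda$ yields the Hille--Yosida contraction estimate. What your approach buys is an explicit domain, analyticity of the semigroup for $0<\alpha<1$, and an honest treatment of the sign: you correctly observe that in the strict $C_0$ sense it is $-A^\alpha$ that generates the forward contraction semigroup, a point the paper's statement and proof gloss over (subordination of the semigroup generated by $B$ yields the generator $-(-B)^\alpha$ in the usual convention, so the paper's unsigned phrasing implicitly relies on the group property of translations and on the relativistic reading in which evolution along $x^i\ne x^0$ may run in either direction). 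What the paper's subordination argument buys in exchange is generality: it works in any Banach space directly from the resolvent bound of Lemma~\ref{lemma1}, without the $L^2$ spectral diagonalization, which is in keeping with the abstract framework the rest of the paper aims at. Your closing remark that the multiplier definition agrees with the Balakrishnan integral is the right consistency check tying the two constructions together.
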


\begin{proof}
According to Lemma~\ref{lemma1}, $\mu^{1/2} \partial_t$ is the infinitesimal generator in $L^2(-\infty,\infty)$.
For an infinitesimal generator $\mu^{1/2} \partial_t$ in $L^2(-\infty,\infty)$, let the one-parameter semigroup generated by $\mu^{1/2} \partial_t$ be denoted by ${\mathcal V}(x)$.
An infinitesimal generator $\mu^{1/2} \partial_t$ is a closed linear operator in $L^2(-\infty,\infty)$.
Its fractional power
\[ \begin{array} {ll}
(\mu^{1/2} \partial_t)^{\alpha}, \quad 0<\alpha<1
\end{array} \]
has been confirmed to be well-defined by S. Bochner~\cite{49bochner} and R.S. Phillips~\cite{52phillips} as the infinitesimal generator of semigroup (cf.~K. Yosida~\cite{{60yosida}}):
\[ \begin{array} {ll}
W(x) w_0 = \int_0^{\infty} {\mathcal V}(x) w_0 ~ d \gamma(\lambda),
\end{array} \]
for $w_0 \in L^2(-\infty, \infty)$, where $W(x)$ is the semigroup for $x$-direction.
The measure $d \gamma(\lambda) \ge 0$ is defined through the Laplace integral
\[ \begin{array} {ll}
\exp (-tk^{\alpha}) = \int_0^{\infty} \exp(- \lambda k) ~ d \gamma(\lambda),
\end{array} \]
where $t, k>0$ is satisfied.
\end{proof}

By taking $\alpha = 1/2$, $(\mu^{1/2} \partial_t)^{1/2}$ is confirmed to be an infinitesimal generator in $L^2(-\infty, \infty)$.
Because  $\Omega$ is included in the resolvent set of $-(\mu^{1/2} \partial_t)^{1/2}$, it is readily seen that $-(\mu^{1/2} \partial_t)^{1/2}$ is also an infinitesimal generator in $L^2(-\infty, \infty)$.

\subsection{Generalization of the Cole-Hopf transform}
The next theorem follows.

\begin{theorem} \label{thm3}
Let $i$ be an integer satisfying $0 \le i \le n$, and $Y$ be a dense subspace of Banach space $X$.
Let an invertible evolution family $\{ U(x^i, \xi^i)\}_{0 \le x^i,\xi^i \le L}$ be generated by $A(x^i)$ for $0 \le x^i,\xi^i \le L$ in a Banach space $X$.
$U(x^i, \xi^i)$ and $A(x^i)$ are assumed to commute.
For any $u_{\xi^i} \in Y \subset X$, the logarithmic representation
\begin{equation} \label{spatrep3} \begin{array}{ll}
 A(x^i)  U(x^i, \xi^i) u_{\xi^i}  
  =  ( \kappa I +  U(x^i, \xi^i))~ [ \partial_{x^i} {\rm Log} ~ ( U(x^i, \xi^i) + \kappa I) ]  u_{\xi^i} ,
\end{array} \end{equation}
is the generalization of the Cole-Hopf transform in the following sense:
\begin{itemize}
\item the linear equation is not necessarily the heat equation;
\item the spatial dimension of the equation is not necessarily equal to 1;
\item the differential in Eq.~(\ref{spatrep3}) is not necessarily for a spatial variable $x$;
\end{itemize}
where the logarithmic representation is obtained in a general Banach space framework, and $\kappa \ne 0$ is a complex number.
In particular, if $(U(x^i, \xi^i) u_{\xi^i})^{-1}$ exists for a given interval $0 \le x^i,\xi^i \le L$, its normalization
\begin{equation} \label{spatrep5} \begin{array}{ll}
 A(x^i)  
  =  ( \kappa  U(\xi^i, x^i)  +  I)~ [ \partial_{x^i} {\rm Log} ~ ( U(x^i, \xi^i) + \kappa I) ]
\end{array} \end{equation}
defined in $X$ corresponds to $\psi(t,x)$ in Eq.~(\ref{ch-trans}). 
\end{theorem}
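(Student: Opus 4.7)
The plan is to obtain Eq.~(\ref{spatrep3}) as a direct consequence of the logarithmic representation (\ref{logex2}) established in Section~\ref{sec2}, to derive the normalized form (\ref{spatrep5}) under the extra invertibility hypothesis, and finally to verify each of the three generalization bullets by comparison with the scalar Cole-Hopf identity (\ref{ch-trans}). For (\ref{spatrep3}) I would start from (\ref{logex2}) and apply $U(x^i,\xi^i)$ from the left. The commutation hypothesis guarantees that $U(x^i,\xi^i)$ commutes with $A(x^i)$ and, through functional calculus for the principal branch of the logarithm, also with $\partial_{x^i}{\rm Log}(U(x^i,\xi^i)+\kappa I)$. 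Hence $U(x^i,\xi^i)A(x^i)u_{\xi^i}=A(x^i)U(x^i,\xi^i)u_{\xi^i}$ on the left-hand side, while on the right $U(x^i,\xi^i)(I+\kappa U(\xi^i,x^i))=U(x^i,\xi^i)+\kappa I$ by the group identity (\ref{sg3}), which yields (\ref{spatrep3}).

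For the normalization (\ref{spatrep5}) I would formally right-multiply (\ref{spatrep3}) by $(U(x^i,\xi^i)u_{\xi^i})^{-1}$, commuting the inverse through the group factor $\kappa I+U(x^i,\xi^i)$ so that it reappears as $\kappa U(\xi^i,x^i)+I$. The scalar Cole-Hopf transform (\ref{ch-trans}) reads $\psi=-2\mu^{-1/2}(\partial_x u)u^{-1}$, i.e., a derivative of $u$ divided by $u$; the abstract form (\ref{spatrep5}) exhibits exactly the same structure, namely a derivative of ${\rm Log}(U+\kappa I)$ normalized by a factor playing the role of $u^{-1}$. The complex parameter $\kappa\neq 0$ has no scalar analogue and corresponds to the Banach-algebra regularization of Ref.~\cite{17iwata-1} that permits the operator logarithm to be defined without assuming sectoriality of $U$.

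The three generalization bullets would then be checked in turn: the linear equation in (\ref{logex2}) is the abstract evolution equation $\partial_{x^i}U(x^i,\xi^i)u_{\xi^i}=A(x^i)U(x^i,\xi^i)u_{\xi^i}$ with arbitrary generator $A$ and Banach space $X$, so the heat equation is just one realization (supplied for instance by Lemmas~\ref{lemma1} and \ref{thm1} through $A=\pm(\mu^{1/2}\partial_t)^{1/2}$ on $L^2(-\infty,\infty)$); the space-time index $x^i$ ranges over $0\le i\le n$ with arbitrary $n$; and the relativistic formulation of Section~\ref{sec2} permits $i=0$ as the evolution direction, in which case $\partial_{x^i}$ is a time derivative. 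The main obstacle would be the rigorous interpretation of the ``division'' in the normalization step: since $U(x^i,\xi^i)u_{\xi^i}$ is an element of $X$ rather than a scalar, the inverse $(U(x^i,\xi^i)u_{\xi^i})^{-1}$ only makes sense pointwise on the underlying domain, and some hypoellipticity or positivity input is needed to rule out vanishing --- precisely the role played in the classical Cole-Hopf set-up by the strict positivity of the heat kernel.
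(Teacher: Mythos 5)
Your proposal follows essentially the same route as the paper's own proof: Eq.~(\ref{spatrep3}) is obtained from the relativistic logarithmic representation (\ref{logex2}) by applying the evolution operator and invoking the commutation assumption together with the group property, the normalized form (\ref{spatrep5}) is then read off as the abstract analogue of $-2\mu^{-1/2}(\partial_x u)\,u^{-1}$, and the original heat/Burgers case is recovered via Lemmas~\ref{lemma1} and \ref{thm1}, with the three bullets justified by the generality of the framework and the relativistic choice of evolution direction. The only noteworthy deviations are minor: the paper carries out the normalization with the operator inverse $W(x,\xi)^{-1}$ (so only invertibility of the evolution family is used, the pointwise non-vanishing of $W(x,\xi)w_{\xi}$ entering only through the remark that the solution ``permits its logarithm''), and it attributes the first bullet to the nonzero $\kappa$ rather than to the arbitrariness of the generator, whereas you right-multiply by the vector inverse $(U u_{\xi^i})^{-1}$ and correctly flag the positivity/hypoellipticity issue that this raises.
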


\begin{proof}
The abstract case of original Cole-Hopf transform is included in the description of the logarithmic representation (\ref{spatrep3}).
Indeed, let an invertible evolution family $\{ W(x, \xi)\}_{0 \le x,\xi \le L}$ be generated by ${\mathcal A}(x)$ for $0 \le x,\xi \le L$.
According to Lemma~\ref{thm1} the logarithmic representation of relativistic form (\ref{logex2}) is obtained in this case as
\[  \begin{array}{ll}
 {\mathcal A}(x) w_{\xi} =  (I+ \kappa W(\xi, x))~ [ \partial_{x} {\rm Log} ~ (W(x,\xi) + \kappa I) ] w_{\xi}
\end{array} \]
and hence as
\[  \begin{array}{ll}
 {\mathcal A}(x)  W(x, \xi) w_{\xi} 
  =  (\kappa I + W(x, \xi))~ [ \partial_{x} {\rm Log} ~ (W(x,\xi) + \kappa I) ] w_{\xi},
\end{array} \]
using the commutation assumption.
The nonlinear Anzatz $-2 \mu^{-1/2} (\partial_x u(t,x)) ~ u(t,x)^{-1}$ of the Burgers' equation shown in Eq.~(\ref{ch-trans}) is represented by
\begin{equation} \label{spatrep4} \begin{array}{ll}
 -2 \mu^{-1/2}({\mathcal A}(x)  W(x, \xi)) ~  W(x, \xi)^{-1}   \vspace{1.5mm}\\
  = -2 \mu^{-1/2}  (\kappa I + W(x, \xi))~ [ \partial_{x} {\rm Log} ~ (W(x,\xi) + \kappa I) ] ~  W(x, \xi)^{-1}  \vspace{1.5mm}\\
  = -2 \mu^{-1/2}  (\kappa  W(\xi, x) + I)~ [ \partial_{x} {\rm Log} ~ (W(x,\xi) + \kappa I) ].
\end{array} \end{equation}
As in the original derivation of the Cole-Hopf transform, the solution of heat equation $w(t,x) = W(x,\xi) w_{\xi}(t)$ solved along the $x$-axis permits its logarithm function.

The first property arises from the introduction of nonzero $\kappa$ in the general form of the logarithmic representation.
According to the introduction of nonzero $\kappa$, the applicability is significantly increased.
The second property is realized by the abstract nature of the logarithmic representation.
The third property is due to the relativistic treatment.

The similarity between Eq.~(\ref{spatrep5}) and the standard definition of operator norm is clear.
In particular the evolution direction is generalized from $x$ to $x^i$ in Eq.~(\ref{spatrep5}).
\end{proof}

The generalized Cole-Hopf transform (\ref{spatrep3})  shows the usefulness of defining $A(x^i) u_{\xi^i}$ from $ U(x^i, \xi^i) u_{\xi^i}$ for $u_{\xi^i} \in Y \subset X$. 
Indeed, $A(x^i) u_{\xi^i}$ is suggested to be a solution of a certain nonlinear evolution equation for the $x^j$ direction with $i \ne j$.

\section{Concluding remark}
The Burgers' equation (\ref{bgeq}) itself is useful to analyze nonlinear wave phenomena, where the similarity to the other equations such as the Korteweg-de Vries equation~\cite{85korteweg} is noticed.
In particular nonlinear advection term $\psi \partial_x \psi$ appears in those equations.
This term is intriguing enough to be included in the Navier-Stokes equations.
In case of the Cole-Hopf transform, the formation of nonlinearity by the linear solution is formally understood by the Leibnitz rule:
\begin{equation} \label{leib} \begin{array}{ll}
(u v^{-1})' = (u'v - v' u)v^{-2},
\end{array} \end{equation}
in the basic calculus, where the notation $'$ denotes the differentiation along the $t$-axis.
This formula is essentially valid to functional analysis under the suitable assumption, where the commutation between $u$ and $v$, and the existence of $u^{-1}$ and $v^{-1}$ are assumed in this discussion.
Let $u$ and $v$ be the functions of $t$, $x$, $y$, and so on.
In the present discussion, $v$ and $-u v^{-1}$ mean the linear and nonlinear solutions respectively.
\begin{equation} \label{leib2} \begin{array}{ll}
(-u v^{-1})' =  [ v' v^{-1} -  u'  u^{-1} ] (-u v^{-1})
\end{array} \end{equation}
follows.
Both $v' v^{-1}$ and $u' u^{-1}$ are regarded as the logarithmic derivative.

The emergence of the nonlinearity can be understood in a concrete setting.
Indeed, in case of the Cole-Hopf transform, $u = - \partial_x v$ leading to $u' = - (\partial_x v)'$ is valid, then 
\[ \begin{array}{ll}
 u' u^{-1} =     (\partial_x v)'   (\partial_x v)^{-1}  
\end{array} \]
results in
\[ \begin{array}{ll}
(-u v^{-1})' =  [ v' v^{-1}   -   (\partial_x v)'   (\partial_x v)^{-1}] (-u v^{-1}),  
\end{array} \]
where the term $ (\partial_x v)'   (\partial_x v)^{-1}$ is also regarded as the logarithmic derivative of unknown function.
In a symmetric fashion, a specific treatment for the Cole-Hopf transform $v' = \partial_x^2 v$ leads to
\[ \begin{array}{ll}
\partial_x (-u v^{-1}) 
 =  [  (\partial_x v) v^{-1} - (\partial_x^2 v) (\partial_x v)^{-1} ] (-u v^{-1})    
\end{array} \]
and therefore
\[ \begin{array}{ll}
 (-u v^{-1})  \partial_x (-u v^{-1}) 
 =   (\partial_x v) v^{-1} (-u v^{-1})^2   - v' v^{-1} (-uv^{-1})       
\end{array} \]
are true under the commutation between $(-uv^{-1})$ and $\partial_x (-u v^{-1}) $.
This is the key to the nonlinear emergence for the Cole-Hopf transform; $v'v^{-1}$ of Eq.~(\ref{leib2}) generates a nonlinearity.
Consequently the substitution clarifies
\begin{equation} \begin{array}{ll}
(-u v^{-1})'  \vspace{1.5mm} \\
\quad     =   (\partial_x v) v^{-1} (-u v^{-1})^2 -  (-u v^{-1})  \partial_x (-u v^{-1}) 
 -   (\partial_x v)'   (\partial_x v)^{-1}  (-u v^{-1}).  
\end{array} \end{equation}
It shows that the unknown function $-u v^{-1}$ satisfies a nonlinear equation in which nonlinear advection term $ (-uv^{-1}) \partial_x (-u v^{-1})$ appears.
In particular $ (\partial_x^2 v) (\partial_x v)^{-1}$ itself is a logarithmic representation that is structurally equal to $(\partial_x u^2) u^{-2} /2$ under the commutation, because 
\[ \begin{array}{ll}
\partial_x u^2    
=-\partial_x (u  \partial_x v)   
=-(\partial_x u)  (\partial_x v)   - u  \partial_x^2 v   
 =  (\partial_x v   - u)  \partial_x^2 v   
 =  2 (\partial_x^2 v) (\partial_x v)^{-1} u^2  \\
\end{array} \]
is true by utilizing $u = - \partial_x v$.
This term shows a different kind of nonlinearity directly associated with $u \partial_x u$.

As a result the emergence of nonlinearity, which corresponds to the emergence of nonlinear advection term in case of the Cole-Hopf transform, is formulated in the abstract framework. 
As seen in Eq.~(\ref{leib2}), the logarithmic derivative and therefore the abstract logarithmic representation have been clarified to be essential to the emergence of certain kind of nonlinearity.
In conclusion, for an unknown function $uv^{-1}$ with a known function $v$ satisfying a linear equation, one or both terms in a linear combination of logarithmic representation
\[
 v' v^{-1} -  u'  u^{-1} 
\]
can be a template of the infinitesimal generators of nonlinear semigroups.  
Indeed, as seen in the above,  a part $v' v^{-1}  = (v' u^{-1}) (u v^{-1})$ generates a nonlinearity, if the unknown function is taken as $u v^{-1}$ with $u = -\partial_x v$. 
The mathematical treatment of this kind of nonlinear semigroup is notably linear-like in which the superposition is always valid at the level of $v$. 
Furthermore rigorous nonlinear solutions can be obtained by solving the linear problem.

\end{document}